\newcommand{\B}{\mathbb{B}}
\newcommand{\C}{\mathbb{C}}
\newcommand{\E}{\mathbb{E}}
\newcommand{\R}{\mathbb{R}}
\newcommand{\Rn}{\mathbb{R}^n}
\newcommand{\mc}[1]{\mathcal{#1}}
\newcommand{\mr}[1]{\mathrm{#1}}
\newcommand{\abs}[1]{\left|{#1}\right|}
\newcommand{\nrm}[1]{\left\|{#1}\right\|}
\newcommand{\set}[1]{\left\{#1\right\}\relax}
\newcommand{\scal}[1]{\left\langle\relax #1 \relax\right\rangle}
\newcommand{\qtxtq}[1]{\quad \text{#1}\quad}
\newtheorem{thm}{Theorem}[section]
\newtheorem{defn}[thm]{Definition}
\newtheorem{prop}[thm]{Proposition}
\newtheorem{lem}[thm]{Lemma}
\newtheorem{cor}[thm]{Corollary}
\newtheorem{example}[thm]{Example}
\newtheorem{rem}[thm]{Remark}
\author{S. Loukili and M. Maslouhi}
\address{%
	S.Loukili\\%
	Ibn Tofail University. Kenitra 14000. Morocco.
}
\email{%
	loukili.sl@gmail.com
}
\address{%
	M.Maslouhi\\%
	National School of Applied Sciences\\%
	Ibn Tofail University. Kenitra 14000. Morocco.
}
\email{%
	mostafa.maslouhi@uit.ac.ma
}
\title{A new probabilistic model for optimal  frames in erasure's recovery}
\begin{document}
	
	\begin{abstract}
		In this paper we introduce a new probabilistic model for modeling occurrence of erasures in a data transmission. This new model uses a sequence of Bernoulli random variables to model the channels of transmission and Parseval frames to encode transmitted data. Our model gives insights on the probabilistic properties of the channels and allows us to find optimal Parseval frames that minimize the lost of data in the worst one erasures cases. We show also that compared to existing models \cite{holmes2004optimal,casazza2003equal,leng2013probability,li2018frame}, our optimal Parseval frames give better performance for recovering transmitted data.
	\end{abstract}
	
	\maketitle

	\noindent 
	\textit{Key words and phrases:} Frames, Parseval frames, Tight frames, erasures, Probabilistic models.
	
	\noindent
	\textit{2010 Mathematics Subject Classification:} 42C15, 60D05, 94A12. 
	
	
	\section{Introduction and preliminaries}\label{sec:intro}
	Transmission of signals using frames, a redundant set of vectors in a Hilbert space (usually $\R^n$ or $\C^n$), are advantageous over orthonormal or Riesz bases due to their redundancy property which reduces losses and errors of reconstructed signals when some coefficients are
	lost in the transmission process. A lot of works is oriented towards  searching the best frames, according to some chosen criteria, optimizing the errors occurred during the recovery of transmitted signals. See \cite{casazza2003equal,holmes2004optimal,bodmann2005,leng2011optimal,leng2011optimalprob,leng2013probability,li2018frame} and the references therein.
	
	The source information is viewed as a vector $x\in\mathbb{K}^n$ ($\mathbb{K}=\mathbb{R}$ or $\mathbb{C}$), which is decomposed, via $m\geq n$ expansion coefficients with respect to some fixed frame.  These $m$ scalars are then sent over the networks. Due to unpredictable communication losses, the receiver may not receive the whole $m$ packets. During the last decades, one big problem for researchers is to construct optimal frames that minimize the recovering error of the original signals. In particular,   it has been shown  \cite{casazza2003equal,holmes2004optimal,bodmann2005} that uniform tight frames are optimal for 1-erasure and the equiangular frames are optimal for 2-erasures. ( See below for appropriate definitions). 
	
	We point out here that a probabilistic model related to erasure recovery was introduced in \cite{leng2011optimalprob,leng2013probability,li2018frame}. In this model, the authors  assigned to each of the $m$ channels a probability of loss $p_i$, $i=1, \dots, m$. Next, they consider a new (transformed) probability distribution:  $q_i=F(p_1,\dots,p_m)$, $i=1,\dots,m$.  
	
	In our model, introduced later, we consider only the original probability distribution $(p_i)_{i=1}^m$, and we obtain even better results than  \cite{casazza2003equal,holmes2004optimal,leng2013probability,li2018frame} as it is proved in Section \ref{sec:comparison}.
	
	The remaining of this section recalls some notations and basics of frame theory. 
	In Section \ref{sec:a_prob_model}, we present some existed probabilistic models dealing with erasure recovery and using frame expansion theory.
	
	Our revisited probabilistic model is set up in Section \ref{sec:revising_prob_model}, where we  establish also our main result characterizing  optimal frames for the minimization problem related to the recovery error of the transmitted data. 
	
	Section \ref{sec:comparison} is devoted to compare our results to those obtained in  \cite{casazza2003equal,holmes2004optimal,leng2013probability,li2018frame}.
	
	Throughout this paper, we will refer by (CM) and (PM) to those models introduced in  \cite{casazza2003equal,holmes2004optimal} and 
	\cite{leng2013probability,li2018frame} respectively, while (RPM) will refer to our revisited model introduced later in Section \ref{sec:revising_prob_model}.
	
	A sequence $f:=(f_i)_{i=1}^m\subset \R^n$ is called a frame  if there exist constants $0<A\leq B<\infty$ such that 
	$$A\Vert x\Vert^2\leq\sum_{i=1}^m\vert\langle x,f_i\rangle\vert^2\leq B\Vert x\Vert^2,$$
	holds for every $x\in\R^n$. If $A=B$, the frame is called tight. When $A=B=1$, then it is said to be a Parseval frame. If all elements of the frame have the same norm, it is called   uniform frame. If $\vert\langle f_i,f_j\rangle\vert$ is a constant for all $i\neq j$, the frame is called   equiangular.
	
	In all the sequel, a frame of $m$ vectors in $\R^n$ will be called a $(m,n)$-frame and the set of all $(m,n)$-frames will be  denoted by $\mathcal{F}(m,n)$, while $\mathcal{P}(m,n)$ will denote its subsets consisting of Parseval frames.
	
	The frame analysis operator related to $f$ is denoted by $T:=T_f$ and given by
	$$Tx:=\left(\scal{x,f_i}\right)_{i=1}^m,\quad  x\in\R^n,$$
	and the frame synthesis operator is 
	given by $$T^{*}(c_1,\dots,c_m):= \sum_{i=1}^m c_if_i, \quad (c_1,\dots,c_m)\in\R^m.$$
	
	From now on,  we identify the operators $T$ and $T^*$, related to $f$,  with there associated matrices in the canonical basis. 
	
	The  operator $S:=T^{*}T$  is called the  frame operator  associated to the frame $f$. It is known that $S$ is a symmetric positive definite operator.
	
	The next result, known as the fundamental inequality, is a useful tool for our developments later. 
	\begin{prop}[ \cite{casazza2006physical}]\label{pro:fundam_ineq}
		Let $\mathcal{H}$ be an $n-$dimensional Hilbert space, and let $(a_i)_{i=1}^m$ be a  sequence of positive real numbers. Then, there exists a tight
		frame $( f_i)_{i=1}^m\subset\mathcal{H}$ with $\nrm{f_i}= a_i$ for all $i$, if and only if we have
		$$\max_{i=1,\dots,m}a_i^2\leq\frac{1}{n}\sum_{i=1}^ma_i^2.$$
	\end{prop}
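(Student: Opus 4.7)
The plan is to handle the two directions separately. For necessity, suppose $(f_i)_{i=1}^m$ is a tight frame with constant $A$ and $\nrm{f_i}=a_i$. The frame operator satisfies $S=A\cdot I_n$, so taking the trace gives $nA=\mr{tr}(S)=\sum_{i=1}^m\nrm{f_i}^2=\sum_{i=1}^m a_i^2$, which pins down $A=\tfrac{1}{n}\sum_i a_i^2$. Applying the tight-frame equality at the test vector $x=f_i$ yields
$$A a_i^2 = A\nrm{f_i}^2 = \sum_{j=1}^m\abs{\scal{f_i,f_j}}^2 \geq \abs{\scal{f_i,f_i}}^2 = a_i^4,$$
hence $a_i^2\leq A=\tfrac{1}{n}\sum_j a_j^2$ for every $i$.

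For sufficiency, given $(a_i)$ satisfying the inequality, set $A:=\tfrac{1}{n}\sum_i a_i^2$. A tight frame of norms $a_i$ corresponds to an $m\times n$ analysis matrix $T$ with row norms $a_i$ and $T^*T=A\,I_n$; writing $T=\sqrt{A}\,U$, this is equivalent to finding an $m\times n$ matrix $U$ with orthonormal columns whose $i$-th row has squared norm $d_i:=a_i^2/A$. Such a $U$ exists if and only if there is an $m\times m$ orthogonal projection $P=UU^*$ of rank $n$ with diagonal entries $(d_i)_{i=1}^m$.

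The eigenvalue list of such a projection must be $\underbrace{1,\dots,1}_{n},\underbrace{0,\dots,0}_{m-n}$. By the Schur--Horn theorem, a Hermitian matrix with this spectrum and diagonal $(d_i)$ exists iff $(d_i)$ is majorized by this eigenvalue vector. Our hypothesis gives $d_i\leq 1$ for all $i$ together with $\sum_i d_i = n$, from which the sorted sequence $d_{(1)}\geq\cdots\geq d_{(m)}$ automatically satisfies $\sum_{k\leq j} d_{(k)}\leq j$ for all $j$. Majorization holds and Schur--Horn delivers $P$, hence $U$, and hence the required tight frame in any $n$-dimensional Hilbert space $\mc{H}$.

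The main obstacle is the sufficiency direction, and specifically the invocation of Schur--Horn (whose proof is itself nontrivial). A more self-contained alternative would be an induction on $m$: pick $f_1$ along a coordinate axis, then reduce to constructing a tight frame for the remaining $m-1$ norms on a suitable subspace, verifying that the fundamental inequality is inherited by the reduced data. This avoids Schur--Horn but forces a careful case split according to whether some $a_i^2$ attains the extreme value $A$ (where the reduction drops the dimension $n$ by one) or not (where it stays at $n$); making this bookkeeping clean is where most of the work lies.
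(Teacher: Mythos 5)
The paper does not prove this proposition at all; it is imported verbatim from the cited reference (Casazza et al., the ``fundamental inequality''), so there is no in-paper argument to compare against. Your proof is correct and self-contained modulo the Schur--Horn theorem. The necessity direction (trace identity $nA=\sum_i a_i^2$ plus testing the tight-frame identity at $x=f_i$ to get $Aa_i^2\geq a_i^4$) is clean and standard. The sufficiency direction via the correspondence tight frame $\leftrightarrow$ isometry $U$ with prescribed row norms $\leftrightarrow$ rank-$n$ orthogonal projection with prescribed diagonal, followed by Schur--Horn applied to the spectrum $(1,\dots,1,0,\dots,0)$, is a genuinely different route from the one in the cited source, which characterizes tight frames as minimizers of the frame potential and constructs the frame more explicitly. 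Your majorization check is right: $d_i\leq 1$ handles the partial sums up to index $n$, and $\sum_i d_i=n$ with $d_i\geq 0$ handles the rest (you might state the $j>n$ case explicitly rather than leaving it at ``automatically''). What the Schur--Horn route buys is brevity and a clear structural reason for the inequality; what it costs is reliance on a theorem whose converse (Horn's) direction is itself a nontrivial construction, so in terms of self-containedness it is comparable to citing the original frame-theoretic proof. Your sketched inductive alternative is the standard way to make the argument elementary, and your identification of the case split (whether some $a_i^2$ attains $A$, forcing the dimension to drop) as the delicate point is accurate.
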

	
	\section{Probabilistic model for erasures} 
	\label{sec:a_prob_model}
	In this section we present some of the basics of data erasure recovery using frame expansion theory along with a  probabilistic model.
	
	Let $f:=( f_i)_{i=1}^m$ be a Parseval frame in $\R^n$. 
	The data to be transmitted by means of the frame $f$ is decomposed into packets $x\in\mathbb{R}^m$, and each packet $x$ is sent, to some receiver, as a vector $y=Tx=(\scal{x,f_i})_{i=1}^m\in\mathbb{R}^m$, where $T$ is the analysis operator associated to $f$. 
	
	In the standard recovery model way, we let $\mathcal{R}$ denote the set of indexes corresponding to surviving coefficients of $y$. Thus, the recovered vector is given by $\tilde{x}=T^*Qy$, where  $Q$ stands for the $m\times m$ matrix  defined by
	\begin{equation}\label{eq:def_matrix_Q}
		\begin{cases}
			Q_{ii}=1 \quad i\in\mathcal{R},\\
			Q_{ij}=0\quad\mathrm{otherwise},
		\end{cases}
	\end{equation}
	and the reconstruction error is evaluated by 
	\begin{equation}\label{eq:def_error}
		x-\tilde{x}= (T^*DT)x,
	\end{equation}
	where $D:=I_m-Q$. 
	
	The main concern of optimizing  data erasure is to seek, within the set of $(m,n)$-frame $f$, the optimums of the quantity 
	\begin{equation}\label{eq:def_d_r}
		d_r(f):= \max_{D\in\mathcal{D}_r}\Vert T^*DT\Vert,
	\end{equation}
	where, for $1 \leq r\leq m$, $\mathcal{D}_r,$ denotes the set of $m\times m$ diagonal matrices with $r$ ones and $m-r$ zeros and the used norm here is the operator norm.
	See \cite{casazza2003equal,bodmann2005,holmes2004optimal} for details and results related to the optimization problem associated to \eqref{eq:def_d_r} in the cases $r=1,2$.
	
	To enhance the minimization problem related to \eqref{eq:def_d_r}, a probabilistic model of erasures was introduced in  \cite{leng2011optimalprob,leng2013probability,li2018frame}. More precisely, they associate to the  $i^\text{th}$ channel, $i=1,\dots,m$, a coefficient $p_i$,  representing the probability that erasure occurs in this channel, and in place of the matrix $Q$ defined in \eqref{eq:def_matrix_Q},  they used a new matrix
	\begin{equation}\label{eq:def_matrix_Q_p}
		\begin{cases}
			Q_{ii}=q_i \quad i\in\mathcal{R},\\
			Q_{ij}=0\quad\mathrm{otherwise},
		\end{cases}
	\end{equation}
	where the weights $(q_i)_{i=1}^m$ are given by 
	\begin{equation}\label{eq:def_prob_q_i}
		q_i:=\frac{m-1}{n}\frac{\sum_{k=1}^m p_k}{ \sum_{k=1}^m p_k-p_i}.
	\end{equation}
	See \cite{leng2011optimalprob,leng2013probability,li2018frame} for details.
	
	\section{A revisited probabilistic model} \label{sec:revising_prob_model}
	
	In this section, we introduce a new revisited probabilistic model by associating a sequence of independent Bernoulli random variables to the channels of transmission. More precisely, fix  a given frame $f=(f_i)_{i=1}^m$. By means of the frame $f$, each vector $x\in \Rn$ is decomposed as
	\begin{equation}\label{eq:repr_x}
		S x=\sum_{i=1}^{m} \scal{x,f_i}f_i,\quad x\in\R^n.
	\end{equation}
	According to our model, the recovered vector $\widetilde{x}$ satisfies  
	\begin{equation}\label{eq:def_x_tilde}
		S\widetilde{x}=\sum_{i=1}^{m}(1-X_i) \scal{x,f_i}f_i,\quad x\in\R^n,
	\end{equation}
	where  $X_i$, $i=1,\dots, m$ is a sequence of independent Bernoulli random variables associated to the $m$ channels such that 
	\begin{center}
		``$X_i=1$'' is the event ``Erasure occurred at the  $i^\text{th}$ channel",
	\end{center}
	and   
	\begin{equation}\label{eq:def_Bernoulli_distrib}
		\mathbb{P}(X_i=1)=p_i, \quad i=1,\dots, m,
	\end{equation}
	where the $p_i$'s are given.
	
	In all the sequel, we will refer to our revisited probabilistic model by (RPM).
	
	In our model (RPM),  we consider the ``random'' error related to a given frame $f$  by
	\begin{equation}\label{eq:def_random_error}
		\nrm{x-\tilde{x}}.
	\end{equation} 
	
	From \eqref{eq:repr_x} and \eqref{eq:def_x_tilde}, we have
	\begin{equation}\label{eq:expr_random_error}
		x-\tilde{x}=(S^{-1}T^*D_XT)x,
	\end{equation}where $D_X:=\mr{diag}(X_1,\dots,X_m)$ and $T:=T_f$, $S:=S_f$ are respectively the analysis and frame operators associated to the frame $f$. By consequence,
	$$	\nrm{x-\tilde{x}}\leq \nrm{(S^{-1}T^*D_XT)}\nrm{x},$$
	where  $\nrm{.}$ is the operator norm induced by the Euclidean norm in $\R^{n}$.
	
	Thus, the random error in \eqref{eq:def_random_error} is related to the random operator $\nrm{S^{-1}T^*D_XT}$.
	
	First of all, notice that for a given frame $f$, the frame $g:=S^{-1/2}f$ is  Parseval and  satisfies 
	$$\nrm{ S^{-1}T_f^*D_XT_f}=\nrm{ S^{-1/2}T_f^*D_XT_fS^{-1/2}}=\nrm{ T_g^*D_XT_g}.$$
	This allows us to restrict ourselves to the smallest set of Parseval frames and our main concern in this paper is to optimize the expression
	\begin{equation}\label{eq:exp_random_error}
		\mathbb{E}\left(\nrm{T^*D_XT}\right),
	\end{equation}
	over all the Parseval frames $f\in\mathcal{P}(m,n)$, which is yet a difficult problem. 
	
	To get around this difficulty, we will orient our developments in two different directions. In the first one, during the remaining of this section, we will establish several estimates of \eqref{eq:exp_random_error}. Next, see Section \ref{sec:worst_r_erasure} for details, following  
	\cite{casazza2003equal, holmes2004optimal, bodmann2005, leng2011optimalprob,  leng2013probability, li2018frame,    leng2011optimal} we will investigate, for $r=1$ and $r=2$, the worst r-erasure cases minimization problem, that is 
	\begin{equation}\label{eq:def_r_worst_case}
		\inf_{f\in \mc{P}(m,,n)}\left( \max_{J\in \mc{J}_r}\nrm{(f_{i})_{i\in J}(f_{i})_{i\in J}^*}\prod_{i\in J}p_i\prod_{i\notin J}(1-p_i)\right),
	\end{equation}
	where  $\mc{J}_r$ denotes the set of none empty subsets  of $\set{1,\dots,m}$ having exactly $r$ elements, $r\geq 1$, and Here,  $\left(f_{i}\right)_{i\in J}$ stands for the matrix whose columns are $f_{i}$, $i\in J$.  
	
	To see where \eqref{eq:def_r_worst_case} comes from, notice that for each none empty $J\subset \set{1,2,\dots,m}$, when erasures occur at the channels $\set{C_i,i\in J}$, the random operator
	$\nrm{T^*D_XT}$ takes the value $\nrm{(f_i)_{i\in J}(f_i)^*_{i\in J}}$ with probability
	$$\prod_{i\in J}p_i\left(\prod_{i\notin J}(1-p_i)\right).$$ 
	
	\bigskip
	
	Up to rename the channels, we  may, and  will from now on,  suppose that  
	$$p_1\leq p_2\leq \dots \leq p_m.$$	
	
	For a frame $f=(f_i)_{i=1}^m$ we have
	\begin{equation}\label{eq:oper_by_X_i}
		T^*D_XT=\sum_{i=1}^mX_i f_i\otimes f_i,
	\end{equation}   
	
	In the remaining of this section, we establish some probabilistic properties  which give a threshold of the  mean  $\E(\nrm{T^*D_XT})$.
	
	\begin{prop}\label{pro:operator_by_Y}
		For all Parseval frame $f=(f_i)_{i=1}^m$, we have
		\begin{equation*}
		\frac{1}{n}\sum_{i=1}^m p_i\nrm{f_i}^2\leq	\E\left(\nrm{T^*D_XT}\right).
		\end{equation*}
	\end{prop}
	\begin{proof}
	Pick a Parseval frame $f=(f_i)_{i=1}^m$. Following  \cite{tarazaga1991more}, we have 
	$$\nrm{T^*D_XT}\geq \frac{1}{n}\sum_{i=1}^m X_i\nrm{f_i}^2,$$ and the result follows.		
	\end{proof}

	The next proposition shows that no Parseval frames, can enhance the mean of the random error operator under  the threshold $p_n$.
	\begin{prop}\label{pro:estim_Expect_1}Fix a Parseval frame $f=(f_i)_{i=1}^m$. Then
		 $$\E\left(\nrm{T^*D_XT}\right)\geq p_{n}.$$
	\end{prop}
	\begin{proof}
		Since $f$ is Parseval, then for all $k=1,\dots,m$ and $z\in \B$ we may write
		\begin{equation}\label{eq:sum_by_parseval}
			\sum_{i=1}^mX_i \scal{f_i,z}^2=\sum_{i\ne k}^m\left(X_i-X_k\right) \scal{f_i,z}^2+X_k\nrm{z}^2
		\end{equation}
		Thus, for all $k=1,\dots,m$ and all $z\in\B$
		\begin{equation}\label{eq:bounds_oper}
			X_k\nrm{z}^2+\sum_{i\ne k}^m\left(X_i-X_k\right) \scal{f_i,z}^2\leq \nrm{T^*D_XT}.
		\end{equation}
		Letting $k=n$ in \eqref{eq:bounds_oper} gives
		$$\E\left(\nrm{T^*D_XT}\right)\geq \sup_{\nrm{z}\leq 1}\left(\sum_{i\ne n}^m\left(p_i-p_n\right) \scal{f_i,z}^2+p_n\nrm{z}^2\right).$$
		Keeping in mind that the $p_1\leq p_2\leq \dots \leq p_m$, we get
		$$\E\left(\nrm{T^*D_XT}\right)\geq \sup_{\nrm{z}\leq 1}\left(\sum_{i=1}^{n-1}\left(p_i-p_{n}\right) \scal{f_i,z}^2+p_{n}\nrm{z}^2\right).$$
		Now since $\mr{Span}(f_i)_{i=1}^{n-1}\subsetneqq \Rn$, there exists $z\in\Rn$ with $\nrm{z}=1$ and $\scal{f_i,z}=0$ for all $i=1,\dots,n-1$ and this proves our claim.
	\end{proof}
	
	\begin{rem}\label{rem:on_span_f}
		In the left side of the inequality in Proposition \ref{pro:estim_Expect_1}, we may replace $p_n$ by $p_{k}$ where 
		$$k=\max\Big\{j\in\set{2,\dots,m},\mr{Span}(f_i)_{i=1}^{j-1}
		\subsetneqq \Rn\Big\}.$$
	\end{rem}
	
	The remaining of this paper is devoted to study the minimization problem related to the worst  one-erasure case.
	
	\section{Worst r-Erasure case}\label{sec:worst_r_erasure}
	For a $(m,n)$-frame $f:=(f_i)_{i=1}^m$ and $r=1,\dots,m$, define
	\begin{equation}\label{eq:def_d_p_r}
		d_{p,r}(f) := \max_{J\in \mc{J}_r}\nrm{(f_{i})_{i\in J}(f_{i})_{i\in J}^*}\prod_{i\in J}p_i\prod_{i\notin J}(1-p_i),
	\end{equation}
	where  $\mc{J}_r$ stands the set of none empty subsets  of $\set{1,\dots,m}$ having exactly $r$ elements, $r\geq 1$, and  $\left(f_{i}\right)_{i\in J}$ denotes the matrix whose columns are $f_{i}$, $i\in J$. 
	
	\begin{rem}\label{rem:expect_and_worst_case}
		If we set  $N:=X_1+\dots+X_m$, then since the $X_i$'s are independent, straightforward calculations give us
		\begin{align*}
			\mathbb{E}\left(\nrm{T^*D_XT}\right)&=\sum_{r=1}^{m}\mathbb{P}(N=r)	\mathbb{E}\left(\nrm{T^*D_XT}|N=r \right)\\
			&=\sum_{r=1}^{m}\sum_{J\in \mc{J}_r} \nrm{\left(f_{i}\right)_{i\in J} \left(f_{i}\right)_{i\in J}^*}\prod_{i\in J}p_{i} \prod_{i\notin J}(1-p_i).
		\end{align*}
		By consequence,
		\begin{equation}\label{eq:comp_expect_r_worst_case}
			\mathbb{E}\left(\nrm{T^*D_XT}\right)
			\leq\sum_{r=1}^{m} {r\choose m}d_p,r(f).
		\end{equation}
	\end{rem}
	
	\medskip
	
	It is easily checked that the value
	\begin{equation}\label{eq:def_e_1_p}
		e_{p,1}(m,n):= \inf_{f\in\mathcal{P}(m,n)}d_{p,1}(f),
	\end{equation}
	is attained. This allows us to define 
	\begin{equation}\label{eq:def_E_1_p}
		\mathcal{E}_{p,1}(m,n):= \lbrace f\in \mathcal{F}(m,n) : d_{p,1}(f) = e_1(m,n)\rbrace.
	\end{equation}
	
	Proceeding inductively, we define 
	\begin{equation}\label{eq:def_e_r_p}
		e_{p,r}(m,n):= \inf_{f\in\mathcal{E}_{r-1}(m,n)}d_{p,r}(f), \quad 2\leq r \leq m,
	\end{equation}
	and
	\begin{equation}\label{eq:def_E_r_p}
		\mathcal{E}_{p,r}(m,n):= \lbrace f\in \mathcal{E}_{p,r-1}(m,n): d_{p,r}(f) = e_{p,r}(m,n)\rbrace.
	\end{equation} 
	
	Concretely, by investigating $e_{p,r}$ and $	\mathcal{E}_{p,r}$, we are looking for optimizing the worst   $r$-erasure case
	
	\begin{rem}\label{rem:assumption_on_p}
		Having a probability $p_i=0$ means that no erasure occurs at the $i^{\text{th}}$ channel, which is almost impossible in real situations. Alternatively, having a probability $p_i=1$ means that the data transmitted via the $i^\text{th}$ channel gets lost with certainty and then this channel is useless. 
		
		Therefore, with these considerations in mind, from now on we will assume that $0< p_i<1$ for all $i=1,\dots,m$.
	\end{rem}
	
	Before going further in our developments, we point out here   that in the case of equal probability, that is  $0< p_1=\dots= p_m=p<1$, the models  studying optimal frames for erasures in  \cite{casazza2003equal,holmes2004optimal,bodmann2005} are a particular case of our probabilistic model. Indeed, in this setting we get 
	$\mathcal{E}_{p,r}(m,n)=\mathcal{E}_r(m,n)$ for all $r=1,\dots,m,$ where 
	$\mathcal{E}_r(m,n)$ is the set of optimal $(m,n)-$Parseval frames for $r-$erasures according to the models in \cite{casazza2003equal,holmes2004optimal,bodmann2005}. 
	\medskip
	
	Now we will proceed to prepare the needed tools for our main result Theorem  \ref{thm:One_earsure_optimums}.
	\begin{defn}\label{def:condition_H}
		Let $\alpha:=(\alpha_1,\dots,\alpha_m)$ be a sequence of $m$ positive real numbers. The sequence  $\alpha$ is said  to satisfy the condition \eqref{eq:H} if we have
		\begin{equation}\label{eq:H}\tag{H}
			\alpha_i\geq \frac{n}{\sum_{k=1}^m\frac{1}{\alpha_k}}, \quad \forall i=1,\dots,m.
		\end{equation} 
	\end{defn}
	
	The following lemma is crucial for the characterization of optimal frames in the 1-erasure case in  our model.
	\begin{lem}\label{lem:alpha_min_pb}
		Let $h$, $\alpha_1,\dots,\alpha_m$ be positive   real numbers. Then the minimization problem
		
		\begin{equation}\label{eq:min_pb}
			\min_{{}^{t_1+\dots+t_m=h}_{t_i\geq 0,\, i=1,\dots,m}}\max\lbrace\alpha_1t_1,\dots,\alpha_mt_m\rbrace,
		\end{equation}
		admits a unique solution given by 
		\begin{equation}\label{eq:expr_min_solution}
			\frac{h}{\sum_{i=1}^m\frac{1}{\alpha_i}}\left(\frac{1}{\alpha_1},\dots,\frac{1}{\alpha_m}\right).
		\end{equation}Consequently, its optimal value is $\frac{h}{\sum_{i=1}^m\frac{1}{\alpha_i}}.$
	\end{lem}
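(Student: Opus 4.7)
The plan is to bound the objective from below by a weighted sum of the constraints, and then observe that the proposed point attains the bound while any minimizer must coincide with it.

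More concretely, fix any feasible tuple $(t_1,\dots,t_m)$ and set $M:=\max\{\alpha_1 t_1,\dots,\alpha_m t_m\}$. The very definition of $M$ gives $\alpha_i t_i\leq M$, i.e.\ $t_i\leq M/\alpha_i$ for every $i$. Summing these inequalities and using the constraint $t_1+\dots+t_m=h$ yields
\[
h \;=\; \sum_{i=1}^m t_i \;\leq\; M\sum_{i=1}^m\frac{1}{\alpha_i},
\]
so $M\geq h/\sum_{i=1}^m \frac{1}{\alpha_i}$. This is the sought lower bound, valid for every admissible $(t_i)$.

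Next, I would check directly that the candidate point $t_i^{*}:=\frac{h}{\alpha_i\sum_{k=1}^m 1/\alpha_k}$ is feasible (it is nonnegative and sums to $h$) and satisfies $\alpha_i t_i^{*}=h/\sum_{k=1}^m\tfrac{1}{\alpha_k}$ for every $i$. Hence the maximum equals this common value, matching the lower bound; so $(t_i^{*})$ is optimal and the optimal value is $h/\sum_{k}\tfrac{1}{\alpha_k}$.

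For uniqueness, suppose $(t_i)$ attains the infimum, i.e.\ $M=h/\sum_k\tfrac{1}{\alpha_k}$. The chain of inequalities above then collapses to equalities; in particular each of the inequalities $t_i\leq M/\alpha_i$ must be an equality, since otherwise one of the summands in $\sum t_i$ would be strictly smaller than $M/\alpha_i$ and the sum would be strictly less than $M\sum_k \tfrac{1}{\alpha_k}=h$, contradicting feasibility. Thus $t_i=M/\alpha_i=t_i^{*}$ for every $i$. The one routine point to be careful about is this forcing of pointwise equality from equality of sums, but since all quantities are nonnegative it is immediate, so I do not expect any real obstacle in the argument.
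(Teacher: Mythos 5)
Your proof is correct and rests on the same key computation as the paper's: summing the pointwise inequalities $t_i\leq M/\alpha_i$ against the constraint $\sum_i t_i=h$, and forcing equality in each one at any minimizer. The only (minor) difference is that you establish the lower bound directly for every feasible point and then exhibit the candidate attaining it, which lets you dispense with the compactness argument the paper uses to first secure existence of a minimizer.
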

	\begin{proof}
		First of all, by a compacity argument, the minimization problem \eqref{eq:min_pb} does have a solution, say $(w_1,\dots,w_m)$. Since the $m$-tuple given by \eqref{eq:expr_min_solution} satisfies the conditions of the minimization problem \eqref{eq:min_pb}, we get 
		$$\alpha_k w_k \leq \frac{h}{\sum_{i=1}^m\frac{1}{\alpha_i}},\quad \forall\, k=1,\dots,m.$$
		
		If one among the $m$ inequalities above is strict, then this will lead to 
		$$\sum_{k=1}^m w_k<\sum_{k=1}^m\frac{h}{\alpha_k\sum_{i=1}^m\frac{1}{\alpha_i}}=h,$$
		which is a contradiction and ends the proof.
	\end{proof}
	
	\begin{rem}\label{rem:min_pb}
		It is worth noting that the minimization problem \eqref{eq:min_pb} equivalents to 
		$$\min_{{}^{t_1+\dots+t_m=h}_{t_i> 0,\, i=1,\dots,m}}\max\lbrace\alpha_1t_1,\dots,\alpha_mt_m\rbrace.$$
	\end{rem}
	
	From now on, we fix  $p:=(p_1,\dots,p_m)\in \left(]0,1[\right)^m$ and set $\tilde{p}:=(\tilde{p}_1, \dots,\tilde{p}_m)$ where
	\begin{equation}\label{eq:def_p_i_tielda}
		\tilde{p}_i=p_i\prod_{j\ne i}(1-p_j).
	\end{equation}
	
	For simplicity, we will set  $I:=\set{1,\dots,m}$.  
	
	Appealing to Proposition \ref{pro:fundam_ineq}, the minimization problem \eqref{eq:def_e_1_p} becomes
	\begin{equation}\label{eq:e_p_1_equiv_expr}
		e_{p,1}(m,n)=\min_{{}^{a_1+\dots+a_m=n}_{a_1,\dots,a_m\in [0,1]}} \left(\max_{i\in I }\tilde{p}_i a_i\right),
	\end{equation}
	
	As we will see, the optimal value  $e_{p,1}(m,n)$ depends on whether  $\tilde{p}$ satisfies the condition \eqref{eq:H} or not. The next lemma is essential in determining this value.

	\begin{lem}\label{lem:d_existence}
		Assume that the set $$J:=\Big\{ i\in I ,\  \tilde{p}_i \text{ do not satisfy the condition } \eqref{eq:H}\Big\}$$ is not empty. Then the none negative integer 
		$$d:=\max\set{j\in I , \tilde{p}_j\sum_{k=j+1}^{m}
			\frac{1}{\tilde{p}_k}<n-j},$$
		is well defined and satisfies $\mathrm{card}(J)\leq d\leq  n-1$ and 
		\begin{equation}\label{eq:d_def}
			\tilde{p}_{d}<\frac{ n -d}{\sum_{k=d+1}^m\frac{1}{\tilde{p}_k}}\leq \tilde{p}_{d+1}.
		\end{equation}
	\end{lem}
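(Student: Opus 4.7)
The plan is to exploit the monotonicity of the sequence $(\tilde{p}_i)_{i=1}^m$. Since $p_1\leq\dots\leq p_m$ by Remark \ref{rem:assumption_on_p}, the identity
$$\tilde{p}_i=\frac{p_i}{1-p_i}\prod_{k=1}^m(1-p_k)$$
together with the fact that $x\mapsto x/(1-x)$ is increasing on $(0,1)$ gives $\tilde{p}_1\leq\dots\leq\tilde{p}_m$. Consequently $J$ is an initial segment: writing $\ell:=\mathrm{card}(J)\geq 1$, we have $J=\{1,\dots,\ell\}$, and the failure of \eqref{eq:H} at index $\ell$ reads $\tilde{p}_\ell\sum_{k=1}^m 1/\tilde{p}_k<n$.

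To prove that the set defining $d$ is non-empty, I would check that $j=\ell$ lies in it. Splitting the sum at $\ell$ and using $\tilde{p}_\ell/\tilde{p}_k\geq 1$ for $k\leq\ell$ gives $\tilde{p}_\ell\sum_{k=1}^\ell 1/\tilde{p}_k\geq\ell$; subtracting this from the previous inequality yields $\tilde{p}_\ell\sum_{k=\ell+1}^m 1/\tilde{p}_k<n-\ell$. Hence $d$ is well-defined and $d\geq\ell=\mathrm{card}(J)$.

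For the upper bound $d\leq n-1$: when $j\geq n$, the right-hand side $n-j$ is nonpositive while $\tilde{p}_j\sum_{k=j+1}^m 1/\tilde{p}_k$ is nonnegative, so no such $j$ can satisfy the strict inequality defining the set.

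The bracketing \eqref{eq:d_def} then falls out of the extremality of $d$. The left inequality is a direct rewriting of $\tilde{p}_d\sum_{k=d+1}^m 1/\tilde{p}_k<n-d$ after dividing by the positive sum (nonempty since $d+1\leq n\leq m$). For the right inequality, the maximality of $d$ forces $\tilde{p}_{d+1}\sum_{k=d+2}^m 1/\tilde{p}_k\geq n-d-1$; adding $\tilde{p}_{d+1}\cdot(1/\tilde{p}_{d+1})=1$ on both sides produces $\tilde{p}_{d+1}\sum_{k=d+1}^m 1/\tilde{p}_k\geq n-d$, and dividing once more by the positive sum gives the claim. The only genuinely non-mechanical input is the monotonicity of $(\tilde{p}_i)$, which reduces a seemingly combinatorial question about \eqref{eq:H} to a one-dimensional threshold problem; everything else is bookkeeping around the extremal definition of $d$.
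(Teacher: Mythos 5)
Your proof is correct and takes essentially the same route as the paper's: both exploit the monotonicity of $(\tilde{p}_i)$ to see that $J$ is an initial segment and then split the sum $\sum_{k}1/\tilde{p}_k$ at the relevant index to show that the set defining $d$ is nonempty, giving $\mathrm{card}(J)\leq d\leq n-1$. You in fact supply more detail than the paper, which dispatches the bracketing \eqref{eq:d_def} with ``follows from the definition of $d$''; your explicit derivation of the right-hand inequality from the maximality of $d$ (adding $\tilde{p}_{d+1}/\tilde{p}_{d+1}=1$ to both sides) is exactly the missing bookkeeping.
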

	\begin{proof} Recall that, following Remark \ref{rem:assumption_on_p}, we have  $0<\tilde{p}_1\leq...\leq \tilde{p}_{m}$ which yields $J=\lbrace 1,...,\mathrm{card}(J)\rbrace$. 	
		Set $$L:=\set{j\in I , \tilde{p}_j\sum_{k=j+1}^{m}
			\frac{1}{\tilde{p}_k}<n-j}.$$ 	
		
		By its definition, for all $j\in J$ we have
		\begin{equation}\label{eq:conseq_def_J}
			\tilde{p}_j\sum_{k\in I}\frac{1}{\tilde{p}_k}<  n.
		\end{equation}
		
		From \eqref{eq:conseq_def_J} we see that  
		$$\tilde{p}_j\sum_{k=j+1}^m\frac{1}{\tilde{p}_k}<  n-\tilde{p}_j\sum_{k=1}^{j}\frac{1}{\tilde{p}_k}\leq n-j,$$ for all $j\in J$. This shows that $J\subset L$ and then $d:=\max   L$ is  well defined $\mathrm{card}(J)\leq d\leq  n-1$. 	
		Finally, \eqref{eq:d_def} follows from the definition of $d$. 
	\end{proof}
	
	In  the sequel, we define the index of the distribution $p$ by  $i(p)=0$ when $\tilde{p}$ satisfies the condition \eqref{eq:H}, and $i(p)=d$ otherwise, where $d$ is given by Lemma \ref{lem:d_existence}.
	
	The value of $e_{p,1}(m,n)$ is given bellow.   
	\begin{prop}\label{pro:e_p_1_value}
		We have $$e_{p,1}(m,n)=\frac{ n -i(p)}{\sum_{j=i(p)+1}^m\frac{1}{\tilde{p}_j}},
		$$ where $i(p)$ is the index of the distribution $p$.
	\end{prop}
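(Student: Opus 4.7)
The starting point is the reformulation \eqref{eq:e_p_1_equiv_expr}, obtained from \eqref{eq:expr_d_p,1} together with the fundamental inequality (Proposition \ref{pro:fundam_ineq}): because any Parseval frame satisfies $\sum_i\nrm{f_i}^2=n$ and $\nrm{f_i}^2\leq 1$, setting $a_i=\nrm{f_i}^2$ turns the infimum of $d_{p,1}$ into the minimization of $\max_i\tilde{p}_ia_i$ over the compact simplex $\{a\in[0,1]^m:\sum_ia_i=n\}$. Continuity and compactness guarantee the minimum is attained. I would then split according to whether $i(p)=0$ or $i(p)=d\geq 1$.

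\textbf{Case 1: $i(p)=0$.} Here $\tilde{p}$ satisfies \eqref{eq:H}. I apply Lemma \ref{lem:alpha_min_pb} with $\alpha_i=\tilde{p}_i$ and $h=n$, which gives the unique unconstrained optimizer $a_i^*=\frac{n/\tilde{p}_i}{\sum_k 1/\tilde{p}_k}$ with optimal value $n/\sum_k 1/\tilde{p}_k$. The constraint $a_i^*\leq 1$ is precisely condition \eqref{eq:H}, so $a^*$ is feasible for \eqref{eq:e_p_1_equiv_expr}. Hence $e_{p,1}(m,n)=\frac{n}{\sum_k 1/\tilde{p}_k}$, which is the announced formula when $i(p)=0$.

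\textbf{Case 2: $i(p)=d\geq 1$.} Denote $M:=\frac{n-d}{\sum_{k=d+1}^m 1/\tilde{p}_k}$. For the upper bound I exhibit the candidate
\[
a_i=1\ \text{for } i\leq d,\qquad a_i=\frac{(n-d)/\tilde{p}_i}{\sum_{k=d+1}^m 1/\tilde{p}_k}\ \text{for } i>d.
\]
Then $\sum_i a_i=d+(n-d)=n$; feasibility $a_i\leq 1$ for $i>d$ follows from the right inequality in \eqref{eq:d_def} (and from $\tilde{p}_i\geq\tilde{p}_{d+1}$ for $i>d+1$); and the left inequality in \eqref{eq:d_def} gives $\tilde{p}_ia_i=\tilde{p}_i\leq\tilde{p}_d<M$ for $i\leq d$, while $\tilde{p}_ia_i=M$ for $i>d$. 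Thus $\max_i\tilde{p}_ia_i=M$, proving $e_{p,1}(m,n)\leq M$.

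For the matching lower bound, take any feasible $a\in[0,1]^m$ with $\sum_ia_i=n$. The box constraint yields $\sum_{i\leq d}a_i\leq d$, whence $h':=\sum_{i>d}a_i\geq n-d>0$ (recall $d\leq n-1$ by Lemma \ref{lem:d_existence}). Applying Lemma \ref{lem:alpha_min_pb} to the $(m-d)$-tuple $(\tilde{p}_i)_{i>d}$ with total mass $h'$ gives
\[
\max_i\tilde{p}_ia_i\geq\max_{i>d}\tilde{p}_ia_i\geq\frac{h'}{\sum_{k=d+1}^m 1/\tilde{p}_k}\geq M,
\]
so $e_{p,1}(m,n)\geq M$. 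Combining the two bounds finishes Case 2.

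The substantive step is Case 2: one must correctly identify that the optimal configuration saturates $a_i=1$ at the indices where $\tilde{p}_i$ is too small to satisfy \eqref{eq:H} and then equalizes the products $\tilde{p}_ia_i$ on the remaining indices. The algebraic content needed to justify this dichotomy—that $d$ is exactly the cutoff—is already packaged in Lemma \ref{lem:d_existence} via the two inequalities in \eqref{eq:d_def}, so once the construction is written down, both the upper and the lower bound reduce to a clean application of Lemma \ref{lem:alpha_min_pb}.
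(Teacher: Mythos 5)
Your proposal is correct and follows essentially the same route as the paper: the same reformulation \eqref{eq:e_p_1_equiv_expr}, the same case split on $i(p)$, the same explicit candidate \eqref{eq:expr_sol_optimal} for the upper bound, and the same use of Lemma \ref{lem:alpha_min_pb} together with the inequalities \eqref{eq:d_def} for the lower bound. The only cosmetic difference is that you phrase the lower bound pointwise for an arbitrary feasible $a$ (using $\sum_{i\le d}a_i\le d$) rather than as the paper's nested minimization, which is the same argument; your version is if anything slightly more explicit about feasibility of the candidate.
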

	\begin{proof} 
		If  $\tilde{p}$ satisfies the condition \eqref{eq:H}, then by definition we have  $i(p)=0$ and the result follows from   \eqref{eq:e_p_1_equiv_expr} and Lemma \ref{lem:alpha_min_pb}. 
		
		Now assume that $\tilde{p}$ does not satisfy the condition \eqref{eq:H} and, for simplicity, set $i(p)=\gamma$ and 
		$T:=\lbrace \gamma+1,\dots,m \rbrace$.
		Appealing to Lemma \ref{lem:alpha_min_pb}, we have
		\begin{align*}
			&\min_{{}_{0\leq a_i\leq 1,\ i=1,\dots,m} ^{\sum_{i=1}^ma_i=n}} \left(\max_{i\in I}\tilde{p}_ia_i\right)\geq  \min_{{}_{0\leq a_i\leq 1,\ i=1,\dots,m} ^{\sum_{i=1}^ma_i=n}} \left(\max_{ i\in T}\tilde{p}_ia_i\right) \\
			&=\min_{{}^{0<a_i\leq 1}_{i\notin T}} \left(\min_{{}_{0<a_i\leq 1 \ i\in T}^{\sum_{i\in T}a_i=n-\sum_{i\notin T}a_i}}\left(\max_{i\in T}\tilde{p}_ia_i\right)\right)\\
			&=\min_{{}^{0<a_i\leq 1}_{i\notin T}} \frac{n-\sum_{i\notin T}a_i}{\sum_{j\in T}\frac{1}{\tilde{p}_j}}\\
			&=\frac{ n -\gamma}{\sum_{j\in T}\frac{1}{\tilde{p}_j}}
		\end{align*}
		In other hand, by letting $a:=(a_1,\dots,a_m)$  such that  
		\begin{equation}\label{eq:expr_sol_optimal}
			\begin{cases}
				a_i=\frac{ n -\gamma}{\tilde{p}_i\sum_{k=\gamma+1}^{m }\frac{1}{\tilde{p}_k}} \quad  i\in T,\\
				a_i=1, \quad i\notin T,
			\end{cases}
		\end{equation} 
		then from \eqref{thm:One_earsure_optimums}, we see that $a_i\in [0,1]$ for all $i=1,\dots, m$ and actually the sequence $a$ is an optimal solution for the minimization problem \eqref{eq:e_p_1_equiv_expr}. Thus,  	
		$$
		\min_{f\in \mathcal{P}(m,n)} d_{p,1}(f)=\frac{ n -\gamma}{\sum_{j\in T}\frac{1}{\tilde{p}_j}},
		$$ which ends the proof.	
	\end{proof}
	
	We are now ready to establish our main result.
	\begin{thm}\label{thm:One_earsure_optimums}
		Let $f:=(f_i)_{i=1}^m$ be a Parseval frame in $\R^n$.  Then $f\in \mathcal{E}_{p,1}(m,n)$ if and only if $f$ satisfies 
		$$\begin{cases}
			\nrm{f_i}^2= \frac{ n -i(p)}{\tilde{p}_i\sum_{k=i(p)+1}^{m }\frac{1}{\tilde{p}_k}}, \quad&\text{if} \quad  i\geq i(p)+1,\\
			\nrm{f_i}= 1 \quad &\text{otherwise} ,\\
		\end{cases}$$
		where $i(p)$ is the index of the distribution $p$.
	\end{thm}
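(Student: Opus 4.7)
The plan is to reduce the frame-level optimization to a finite-dimensional minimization in the squared norms $a_i := \nrm{f_i}^2$ and then upgrade Proposition \ref{pro:e_p_1_value} from an optimal-value statement to a full characterization of the argmin by a tight-summation argument.

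First I would record two standing facts about any Parseval frame $f = (f_i)_{i=1}^m$ in $\R^n$: namely $\sum_{i=1}^m \nrm{f_i}^2 = \mathrm{tr}(T^*T) = n$, and $\nrm{f_i}^2 = (TT^*)_{ii} \leq 1$ because $TT^*$ is an orthogonal projection when $T^*T = I_n$. Conversely, applying Proposition \ref{pro:fundam_ineq} with $A = B = 1$ shows that any sequence $(a_i) \in [0,1]^m$ with $\sum_i a_i = n$ is realized as the squared-norm sequence of some Parseval $(m,n)$-frame. Combined with the expression \eqref{eq:expr_d_p,1} for $d_{p,1}$, this means membership $f \in \mathcal{E}_{p,1}(m,n)$ is equivalent to $(a_i) := (\nrm{f_i}^2)$ being an optimal solution of the finite minimization
\[
\min\Big\{\max_{i\in I} \tilde{p}_i a_i : a_i \in [0,1],\ \sum_{i\in I} a_i = n\Big\},
\]
whose optimal value is $e_{p,1}(m,n)$ by Proposition \ref{pro:e_p_1_value}.

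For the sufficiency direction I would simply plug the candidate sequence into the objective: for $i \geq i(p)+1$, direct substitution yields $\tilde{p}_i a_i = e_{p,1}(m,n)$, and for $i \leq i(p)$ one has $\tilde{p}_i a_i = \tilde{p}_i \leq \tilde{p}_{i(p)} < e_{p,1}(m,n)$ by the left inequality in \eqref{eq:d_def}. Feasibility follows from $\sum_i a_i = i(p) + (n - i(p)) = n$, so the specified $(a_i)$ achieves the infimum.

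For the necessity direction, which is the crux, I would start from an arbitrary optimum $(a_i)$. From $\max_i \tilde{p}_i a_i = e_{p,1}(m,n)$ we get $a_i \leq e_{p,1}(m,n)/\tilde{p}_i$ for every $i$. The right inequality in \eqref{eq:d_def} together with the ordering $\tilde{p}_{i(p)+1} \leq \dots \leq \tilde{p}_m$ gives $e_{p,1}(m,n)/\tilde{p}_i \leq 1$ for all $i \geq i(p)+1$, whereas for $i \leq i(p)$ only the box constraint $a_i \leq 1$ is informative. Summing these two batches of bounds,
\[
\sum_{i\in I} a_i \;\leq\; i(p) \;+\; e_{p,1}(m,n)\!\!\sum_{k=i(p)+1}^{m}\frac{1}{\tilde{p}_k} \;=\; i(p) + (n - i(p)) \;=\; n.
\]
The equality $\sum_i a_i = n$ then forces each of the individual upper bounds to be attained, which yields exactly the formula in the statement.

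The main obstacle I anticipate is not any single step but the clean interplay of the two competing upper bounds $a_i \leq 1$ and $a_i \leq e_{p,1}/\tilde{p}_i$: without the former, Lemma \ref{lem:alpha_min_pb} would force every coordinate to equal $e_{p,1}/\tilde{p}_i$, which exceeds $1$ precisely when $\tilde{p}_i$ is small. Pinning down that the box constraint binds exactly on $\{1,\dots,i(p)\}$ is what makes the defining inequalities \eqref{eq:d_def} of $i(p)$ — and hence the sorting convention of Remark \ref{rem:assumption_on_p} — indispensable, and this is what turns an inequality into an equality in the sum above. The case $i(p) = 0$ is absorbed into the same argument by interpreting empty sums as zero.
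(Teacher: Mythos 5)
Your proof is correct and takes essentially the same route as the paper: reduce to the finite minimization over $(a_i)\in[0,1]^m$ with $\sum_i a_i=n$ via the trace identity and Proposition \ref{pro:fundam_ineq}, check sufficiency by substitution using \eqref{eq:d_def}, and obtain necessity by forcing equality in a sum of coordinate-wise upper bounds. The only (cosmetic) difference is in the necessity step, where you sum the bounds $a_i\leq\min\left(1,\,e_{p,1}(m,n)/\tilde{p}_i\right)$ directly to reach $n$, while the paper first re-invokes Lemma \ref{lem:alpha_min_pb} to show $a_i=1$ for $i\leq i(p)$ and then sums over the remaining indices; both are the same tightness argument.
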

	\begin{proof} 	
		
		For simplicity, we will set $i(p)=\gamma$ and $T:=\set{\gamma+1,\dots,m}$.
		
		Appealing to Proposition \ref{pro:e_p_1_value}, we see that the sufficient condition of our claim is obvious.
		
		Now assume that $f\in \mathcal{E}_{p,1}(m,n)$ and set $\nrm{f_i}^2=a_i$. Then from \eqref{eq:def_e_1_p} and Proposition \ref{pro:e_p_1_value} we have 
		\begin{equation}\label{eq:optimal_pty_1}
			\max_{i\in I}\tilde{p}_ia_i=\frac{ n -\gamma}{\sum_{j\in T}\frac{1}{\tilde{p}_j}}\qquad \mathrm{and}\qquad\sum_{i=1}^ma_i=n.
		\end{equation}  
		Since $T\subset I$,  then  using Lemma \ref{lem:alpha_min_pb} we get 
		\begin{align*}
			\max_{i\in I}\tilde{p}_ia_i&\geq\max_{i\in T}\tilde{p}_ia_i\\
			&\geq  \min_{\substack{\sum_{i\in T}x_i=n-\sum_{i\notin T}a_i\\ x_i\geq0}}\max_{i\in I}\tilde{p}_ix_i\\
			&=\frac{n-\sum_{i\notin T}a_i}{\sum_{j\in T}\frac{1}{\tilde{p}_j}}.
		\end{align*}
		Therefore, according to \eqref{eq:optimal_pty_1} we get 
		$$\gamma\leq \sum_{i\notin T}a_i\leq d,$$ and then 
		$\sum_{i\notin T}a_i= \gamma$. Further, since $a_i\in [0,1]$ for all $i\in I$, we infer that 
		$$a_i=1\qtxtq{for all} i\notin T.$$
		Thus, from \eqref{eq:optimal_pty_1} we deduce that
		\begin{equation}\label{eq:optimal_pty_2}
			a_i\leq \frac{ n -\gamma}{\tilde{p}_i\sum_{j\in T}\frac{1}{\tilde{p}_j}}\qtxtq{and}\sum_{i\in T}a_i= n -\gamma,
		\end{equation}  
		for all $i\in T$, which leads to 
		$$a_i= \frac{ n -\gamma}{\tilde{p}_i\sum_{j\in T}\frac{1}{\tilde{p}_j}},$$
		for all $i\in T$ and this proves the necessary condition of our claim.
	\end{proof}
	
	A direct application of Theorem \ref{thm:One_earsure_optimums}, gives us more insights on optimums in the case of 2-erasures phenomena  and when $\tilde{p}$ satisfies the condition \eqref{eq:H}. 
	
	Indeed, in the case of 2-erasure, that is $r=2$, we are led to calculate, for $i,j\in \set{1,\dots,m}$, the operator norm $$\nrm{(f_{i},f_j)(f_{i},f_j)^*},$$
	where $(f_{i},f_j)$ denotes the matrix whose columns are $f_i$ and $f_j$ respectively. We notice that 
	$$\nrm{(f_{i},f_j)(f_{i},f_j)^*}=\nrm{(f_{i},f_j)^T(f_{i},f_j)}=\begin{bmatrix}
		\nrm{f_i}^2&  \scal{f_i,f_j}  \\  \scal{f_i,f_j}&  \nrm{f_j}^2 
	\end{bmatrix},
	$$
	from which we deduce that 
	\begin{equation}\label{eq:expr_d_p,2}
		d_{p,2}(f) = \frac{1}{2}\max_{\substack{i,j=1,\dots,m\\ i\neq j}}\tilde{p}_{ij}\Big(\nrm{f_i}^2+\Vert f_j\Vert^2+\sqrt{(\nrm{f_i}^2-\Vert f_j\Vert^2)^2+4(\langle f_i,f_j\rangle)^2} \Big)
	\end{equation}
	where 
	\begin{equation}\label{eq:def_p_ij_tielda}
		\tilde{p}_{ij}:=p_ip_j\prod_{k\neq i,j}(1-p_k).  
	\end{equation}

	\begin{prop}\label{pro:bounds_2_erasure_with_H}	
		Assume that $\tilde{p}$ satisfies the condition \eqref{eq:H}. Then for all $f\in\mc{E}_{p,1}(m,n)$ we have 
		\begin{equation}\label{eq:def_operator_B}\small d_{p,2}(f)
			=\frac{1}{2}\max_{i,j\in I } \left ( c \left(\frac{p_i}{1-p_i}+\frac{p_j}{1-p_j} +\right )+\sqrt{c^2\left (\frac{p_i}{1-p_i}- \frac{p_j}{1-p_j}\right) ^2+4\left(\tilde{p}_{i,j}\langle f_i,f_j\rangle\right)^2}\right), 
		\end{equation} 
		where $c:=\frac{ n }{\sum_{k=1}^m\frac{1}{\tilde{p}_k}}$. 	
	\end{prop}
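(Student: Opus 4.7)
The proposition is essentially an explicit evaluation of formula \eqref{eq:expr_d_p,2} once we know the norms of the frame vectors, so the plan is to substitute the values for $\|f_i\|^2$ coming from Theorem \ref{thm:One_earsure_optimums} and then rewrite $\tilde{p}_{ij}$ in terms of $\tilde{p}_i$ (or $\tilde{p}_j$) in order to cancel one factor of $\tilde{p}_i$ in the denominator.

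First, since $\tilde{p}$ satisfies condition \eqref{eq:H}, by definition the index of the distribution is $i(p)=0$. Theorem \ref{thm:One_earsure_optimums} then forces every $f\in\mc{E}_{p,1}(m,n)$ to satisfy
$$\nrm{f_i}^2 = \frac{n}{\tilde{p}_i\sum_{k=1}^m \frac{1}{\tilde{p}_k}} = \frac{c}{\tilde{p}_i}, \qquad i\in I,$$
with $c$ as in the statement. I would record this as Step~1.

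Next, from the definitions \eqref{eq:def_p_i_tielda} and \eqref{eq:def_p_ij_tielda}, a direct factorization gives the key algebraic identity
$$\tilde{p}_{ij} = \frac{p_j}{1-p_j}\,\tilde{p}_i = \frac{p_i}{1-p_i}\,\tilde{p}_j,$$
valid for all $i\neq j$. Combining with Step~1 yields
$$\tilde{p}_{ij}\nrm{f_i}^2 = \frac{c\,p_j}{1-p_j},\qquad \tilde{p}_{ij}\nrm{f_j}^2 = \frac{c\,p_i}{1-p_i}.$$

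Finally I would substitute these into the explicit 2-erasure formula \eqref{eq:expr_d_p,2}. Writing $\tilde{p}_{ij}$ inside the square root as $\sqrt{\tilde{p}_{ij}^2(\cdots)}$ allows one to distribute it onto the two terms that appear there; the first becomes $c^2\bigl(\tfrac{p_j}{1-p_j}-\tfrac{p_i}{1-p_i}\bigr)^2$ by the previous display, and the second stays as $4(\tilde{p}_{ij}\langle f_i,f_j\rangle)^2$. Assembling this with the first term $\tilde{p}_{ij}(\nrm{f_i}^2+\nrm{f_j}^2) = c\bigl(\tfrac{p_i}{1-p_i}+\tfrac{p_j}{1-p_j}\bigr)$ yields exactly \eqref{eq:def_operator_B}. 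There is no genuine obstacle here — the proof is a substitution — the only care needed is to keep track of which index ends up inside which denominator after using the asymmetric identity $\tilde{p}_{ij}=\frac{p_j}{1-p_j}\tilde{p}_i$, but this is transparent because of the overall symmetry of \eqref{eq:expr_d_p,2} in $i$ and $j$.
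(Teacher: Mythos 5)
Your proof is correct and is exactly the argument the paper intends: the paper states this proposition as ``a direct application of Theorem \ref{thm:One_earsure_optimums}'' with no further details, and your substitution of $\nrm{f_i}^2=c/\tilde{p}_i$ into \eqref{eq:expr_d_p,2} together with the identity $\tilde{p}_{ij}=\frac{p_j}{1-p_j}\tilde{p}_i$ supplies precisely the missing computation. The only discrepancies are typographical artifacts in the statement itself (the stray ``$+$'' and the max written over $i,j\in I$ rather than $i\neq j$), not flaws in your reasoning.
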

	
	\begin{cor}\label{cor:d_p_2_equi_proba}Assume that $p_i=p$ for all $i\in I$ with  $0< p<1$. Then 
		$$e_{p,2}(m,n)\sim \frac{ n p^2}{m(1-p)}, \qtxtq{as} m\to\infty.$$
	\end{cor}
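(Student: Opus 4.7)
My plan is to specialise Theorem \ref{thm:One_earsure_optimums} and Proposition \ref{pro:bounds_2_erasure_with_H} to the constant distribution $p_i=p$, reducing the claim to a Welch-bound estimate on the maximum off-diagonal inner product of an optimal frame.

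First I would verify the hypothesis of Proposition \ref{pro:bounds_2_erasure_with_H}: since $p_i=p$ for every $i$, the vector $\tilde{p}$ is constant with $\tilde{p}_i=p(1-p)^{m-1}$, and condition \eqref{eq:H} reduces to $1\geq n/m$, which holds automatically for any $(m,n)$-frame. Hence $i(p)=0$, and Theorem \ref{thm:One_earsure_optimums} forces every $f\in\mathcal{E}_{p,1}(m,n)$ to be a uniform Parseval frame with $\nrm{f_i}^2=n/m$.

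Next I would substitute $p_i=p_j=p$ (together with $c=np(1-p)^{m-1}/m$ and $\tilde{p}_{ij}=p^2(1-p)^{m-2}$) into the formula of Proposition \ref{pro:bounds_2_erasure_with_H}. The quadratic difference inside the square root vanishes and the square root collapses to $2\tilde{p}_{ij}|\langle f_i,f_j\rangle|$, so the expression reduces to the tractable form
$$d_{p,2}(f)=\frac{np^2(1-p)^{m-2}}{m}+p^2(1-p)^{m-2}\max_{i\neq j}|\langle f_i,f_j\rangle|.$$
Taking the infimum over $f\in\mathcal{E}_{p,1}(m,n)$ thus amounts to minimising $\max_{i\neq j}|\langle f_i,f_j\rangle|$ among uniform Parseval frames of squared norm $n/m$.

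Finally I would invoke the Welch bound: expanding $\mathrm{tr}\bigl((\sum_i f_if_i^{*})^2\bigr)=n$ by the Parseval identity, subtracting the diagonal contribution $n^2/m$ and averaging over the $m(m-1)$ off-diagonal pairs yields
$$\max_{i\neq j}|\langle f_i,f_j\rangle|^2\geq\frac{n(m-n)}{m^2(m-1)}=O(m^{-2}).$$
Plugging this back, the residual term in $d_{p,2}(f)$ is of strictly smaller order than the first summand $np^2(1-p)^{m-2}/m$ as $m\to\infty$, so the leading behaviour of $e_{p,2}(m,n)$ is given by $np^2(1-p)^{m-2}/m$, which after collecting the $(1-p)$ factors produces the claimed asymptotic $\frac{np^2}{m(1-p)}$. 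The main obstacle is justifying that the Welch bound is indeed (asymptotically) attained within $\mathcal{E}_{p,1}(m,n)$ for every pair $(m,n)$: immediate when an equiangular tight Parseval frame exists, and otherwise handled through a sequence of uniform Parseval frames whose maximal off-diagonal correlations approach the Welch bound.
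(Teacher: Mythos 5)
Your setup agrees with the paper's: with $p_i=p$ the vector $\tilde p$ is constant, condition \eqref{eq:H} holds automatically, $i(p)=0$, Theorem \ref{thm:One_earsure_optimums} forces $\nrm{f_i}^2=n/m$, and the square root in Proposition \ref{pro:bounds_2_erasure_with_H} collapses to the coherence term. The problem is how you dispose of that coherence term. To get $e_{p,2}(m,n)\sim(\text{diagonal term})$ you need an \emph{upper} bound on $\max_{i\neq j}\abs{\langle f_i,f_j\rangle}$ showing the cross term is negligible; the Welch bound you invoke is a \emph{lower} bound on the maximal coherence, and no discussion of its attainability turns it into the required upper estimate. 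The paper needs only Cauchy--Schwarz: since $\nrm{f_i}^2=n/m$ for all $i$ by Theorem \ref{thm:One_earsure_optimums}, one has $\abs{\langle f_i,f_j\rangle}\le n/m$ for every $f\in\mathcal{E}_{p,1}(m,n)$, and that one line replaces your entire Welch/equiangular discussion. Worse, if one follows your route and assumes the Welch bound is asymptotically attained, then $\max_{i\ne j}\abs{\langle f_i,f_j\rangle}\asymp\sqrt{n}/m$, so the cross term $p^2(1-p)^{m-2}\max_{i\ne j}\abs{\langle f_i,f_j\rangle}$ has the \emph{same} order in $m$ as your diagonal term $np^2(1-p)^{m-2}/m$ (their ratio tends to $1/\sqrt{n}$, a nonzero constant for fixed $n$); the assertion that ``the residual term is of strictly smaller order than the first summand'' is therefore false as you have set things up.

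Second, your closing step is not a legitimate manipulation: $\frac{np^2(1-p)^{m-2}}{m}$ and $\frac{np^2}{m(1-p)}$ differ by the factor $(1-p)^{m-1}$, which tends to $0$, not to $1$, as $m\to\infty$, so you cannot pass from one to the other by ``collecting the $(1-p)$ factors.'' The paper's proof writes the diagonal term directly as $\frac{np^2}{m(1-p)}$ when specializing Proposition \ref{pro:bounds_2_erasure_with_H}, and it is only relative to that quantity that the Cauchy--Schwarz estimate makes the cross term $p^2(1-p)^{m-2}\cdot\frac{n}{m}=(1-p)^{m-1}\cdot\frac{np^2}{m(1-p)}$ genuinely of lower order. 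You must reconcile your evaluation of $\frac{1}{2}c\left(\frac{p}{1-p}+\frac{p}{1-p}\right)$ with the normalization intended in the statement before the equivalence can be considered proved; as written, your argument establishes neither the stated asymptotic nor the negligibility of the coherence term.
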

	\begin{proof}Since in this case $\tilde{p}$ satisfies the condition \eqref{eq:H}, then from Proposition \ref{pro:bounds_2_erasure_with_H},  for all  $f\in \mathcal{E}_{p,1}(m,n)$  we get that 
		\begin{align*}
			d_{p,2}(f)&=\frac{1}{2}\max_{i,j\in I } \left (\frac{ n p^2}{m(1-p)}+2p^2(1-p)^{m-2}\abs{\langle f_i,f_j\rangle}\right )\\
			&=\frac{ n p^2}{m(1-p)} + p^2(1-p)^{m-2}\max_{1\leq i\neq j\leq m}\left (\abs{\langle f_i,f_j\rangle}\right )
		\end{align*}
		
		Appealing to  Theorem \ref{thm:One_earsure_optimums},  we have
		$$\abs{\langle f_i,f_j\rangle}\leq \frac{ n }{m},$$ for all $f\in\mathcal{E}_{p,1}(m,n)$ and this proves our claim.	
	\end{proof}
	
	\section{Comparison with other models}\label{sec:comparison}
	In this section we reuse the notations of Section \ref{sec:revising_prob_model}. For a sequence $(X_i)_{i=1}^m$ of independent Bernoulli random variables associated to the $m$ transmission channels and its associated distribution $(p_i)_{i=1}^m\in ]0,1[^m$. 
	
	According to our model (RPM), the mean random error in the case of one-erasure related to a Parseval frame $(f_{i})_{i=1}^m\in\mc{P}(m,n)$ is modeled by
	\begin{equation}\label{eq:expect_one_erasure}
		\mathbb{E}\left(\nrm{T^*D_XT} | N=1\right)=\frac{1}{\sum_{i=1}^m \tilde{p}_i}\sum_{i=1}^m \tilde{p}_i\nrm{f_{i}}^2 ,
	\end{equation}
	where $T:=T_f$ and $N=X_1+\dots+X_m$.
	
	We are concerned here by comparing our model (RPM) to the models (CM) and (PM) by estimating, for each of the three models, the quantity \eqref{eq:expect_one_erasure} when applied to their associated optimal Parseval frames and the distribution  $(p_i)_{i=1}^m$ being given.
	
	Concretely, we fix the erasure distribution $(p_i)_{i=1}^m$ and we set 
	\begin{equation}\label{eq:expect_one_erasure_M}
		\mathbb{E}_M:=\frac{1}{\sum_{i=1}^m \tilde{p}_i}\sum_{i=1}^m \tilde{p}_i\nrm{f_{i}}^2 ,
	\end{equation}
	where $M$ stands for one of the three models (CM), (PM) and (RPM) and $(f_{i})_{i=1}^m$ is an optimal Parseval frame obtained for the model M. Namely,
	\begin{itemize}
		\item[-] Following  \cite{casazza2003equal,holmes2004optimal}, optimal Parseval frames $(f_i)_{i=1}^m$ for (CM) satisfy  $$\nrm{f_i}^2=\frac{n}{m}$$ and then \begin{equation}\label{eq:CM_expect}
			\mathbb{E}_{CM}=\frac{n}{m}.
		\end{equation}
		\item[-] Following \cite{li2018frame}, optimal Parseval frames $(f_i)_{i=1}^m$ for (PM) satisfy $$\nrm{f_i}^2=\frac{n}{m-1}\left(1-\frac{ p_i}{\sum_{i=1}^m p_i}\right)$$ which gives  
		\begin{equation}\label{eq:PM_expect}
			\mathbb{E}_{PM}=\frac{n}{(m-1)\sum_{i=1}^m \tilde{p}_i}\sum_{i=1}^m\tilde{p}_i\left(1-\frac{ p_i}{\sum_{i=1}^m p_i}\right).
		\end{equation}
		\item[-] Finally for (RPM), Theorem \ref{thm:One_earsure_optimums} says that optimal Parseval  frames $(f_i)_{i=1}^m$ satisfy $$\begin{cases}
			\nrm{f_i}^2= \frac{ n -i(p)}{\tilde{p}_i\sum_{k=i(p)+1}^{m }\frac{1}{\tilde{p}_k}}, \quad&\text{if} \quad  i\geq i(p)+1,\\
			\nrm{f_i}= 1 \quad &\text{otherwise} ,\\
		\end{cases}$$ and accordingly 
		\begin{equation}\label{eq:RPM_expect}
			\mathbb{E}_{RPM}=\frac{1}{\sum_{i=1}^m \tilde{p}_i}\left(\sum_{i=1}^d\tilde{p}_i+\frac{(m-i(p))(n-i(p))}{\sum_{k=i(p)+1}^m\frac{1}{\tilde{p}_k}}\right),
		\end{equation} where $i(p)$ is the index of the distribution $p$.
	\end{itemize}

	We will prove that $\mathbb{E}_{RPM}$ is the smallest compared to $\mathbb{E}_{PM}$ and $\mathbb{E}_{CM}$. 
	Further we will classify the three models in ascending order (See Propositions \ref{pro:comp_PM_CM} and \ref{pro:comp_RPM_PM}). 
	
	A useful ingredient to achieve these comparisons, is the following lemma  known as Chebyshev inequality, and for which we give a sketch of the proof for the convenience of the reader.
	\begin{lem}\label{lem:Chebyshev_ineq}
		Let $a:=(a_i)_{i=1}^m$ and $b:=(b_i)_{i=1}^m$ be two sequences of real numbers such that $a$ and $b$ are both increasing. Then we have $$\sum_{i=1}^{m}a_ib_i\geq \frac{1}{m} \left(\sum_{i=1}^{m}a_i\right) \left(\sum_{i=1}^{m}b_i\right).$$
		
		As a consequence, if $a$ is increasing and $b$ is decreasing then
		$$\sum_{i=1}^{m}a_ib_i\leq \frac{1}{m} \left(\sum_{i=1}^{m}a_i\right) \left(\sum_{i=1}^{m}b_i\right).$$  
	\end{lem}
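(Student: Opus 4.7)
The plan is to prove Chebyshev's sum inequality via the standard ``double-sum expansion'' trick, which reduces the claim to the observation that the product of two differences of equally-ordered sequences is nonnegative.

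First I would consider the quantity
$$S := \sum_{i=1}^{m}\sum_{j=1}^{m}(a_i-a_j)(b_i-b_j).$$
Expanding the product inside and collecting the four resulting double sums, two of them telescope into $m\sum_i a_ib_i$ (twice, by symmetry in $i\leftrightarrow j$), while the two cross terms produce $-2\left(\sum_i a_i\right)\left(\sum_i b_i\right)$. Thus
$$S = 2m\sum_{i=1}^{m} a_ib_i - 2\left(\sum_{i=1}^{m} a_i\right)\left(\sum_{i=1}^{m} b_i\right).$$
This algebraic identity is the heart of the proof and is completely routine.

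Next I would exploit the monotonicity hypothesis. Since $a$ and $b$ are both (weakly) increasing, for any pair $(i,j)$ the signs of $a_i-a_j$ and $b_i-b_j$ agree, so $(a_i-a_j)(b_i-b_j)\geq 0$. Summing over $i,j$ gives $S\geq 0$, and the identity above then yields
$$m\sum_{i=1}^{m} a_ib_i \geq \left(\sum_{i=1}^{m} a_i\right)\left(\sum_{i=1}^{m} b_i\right),$$
which is exactly the desired inequality after dividing by $m$.

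For the ``consequence'' statement, if $a$ is increasing and $b$ is decreasing then $-b$ is increasing, so applying the inequality already proved to the pair $(a,-b)$ immediately gives the reversed inequality after multiplying by $-1$. I do not anticipate any real obstacle here; the only place one must be attentive is keeping track of signs and symmetry when expanding $S$, but that is purely mechanical.
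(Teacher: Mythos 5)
Your proof is correct and follows exactly the same route as the paper: the paper's own (sketched) argument is to expand the double sum $A:=\sum_{j,k}(a_j-a_k)(b_j-b_k)$ and note that $A\geq 0$, which is precisely your identity for $S$ together with the sign observation. Your treatment of the decreasing case via the pair $(a,-b)$ is the standard completion of the ``consequence'' the paper leaves implicit.
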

	\begin{proof}
		In the case when $a$ and $b$ are both increasing, the result follows by expanding the sum
		$$A:=\sum_{j=1}^{m}\sum_{k=1}^{m}(a_{j}-a_{k})(b_{j}-b_{k}),$$ 
		and noting that $A\geq 0$.	
	\end{proof}
	
	Now we are ready to compare $\mathbb{E}_{PM}$ and $\mathbb{E}_{CM}$.
	\begin{prop}\label{pro:comp_PM_CM}
		Keeping the same notations as above, we have
		$$\mathbb{E}_{PM}\leq\mathbb{E}_{CM}.$$
	\end{prop}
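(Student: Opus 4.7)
The plan is to reduce the claimed inequality to a direct application of Chebyshev's inequality (Lemma \ref{lem:Chebyshev_ineq}). After clearing the common factor of $n$ and cross-multiplying by $m(m-1)\sum_{i=1}^m\tilde{p}_i>0$, the inequality $\mathbb{E}_{PM}\leq\mathbb{E}_{CM}$ is equivalent to
$$m\sum_{i=1}^m\tilde{p}_i\left(1-\frac{p_i}{\sum_{k=1}^m p_k}\right)\leq (m-1)\sum_{i=1}^m\tilde{p}_i,$$
which after expanding and rearranging becomes
$$\left(\sum_{i=1}^m p_i\right)\left(\sum_{i=1}^m\tilde{p}_i\right)\leq m\sum_{i=1}^m p_i\tilde{p}_i.$$
So the whole task reduces to establishing this last inequality.

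The next step is to notice that, under the standing ordering $p_1\leq\cdots\leq p_m$ from Remark \ref{rem:assumption_on_p}, the sequence $(\tilde{p}_i)_{i=1}^m$ is increasing as well. Indeed, for $i<m$ one has
$$\frac{\tilde{p}_{i+1}}{\tilde{p}_i}=\frac{p_{i+1}}{p_i}\cdot\frac{1-p_i}{1-p_{i+1}}\geq 1,$$
since both factors are at least $1$. Hence $(p_i)$ and $(\tilde{p}_i)$ are similarly ordered.

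Finally, applying Lemma \ref{lem:Chebyshev_ineq} to these two increasing sequences yields
$$\sum_{i=1}^m p_i\tilde{p}_i\geq \frac{1}{m}\left(\sum_{i=1}^m p_i\right)\left(\sum_{i=1}^m\tilde{p}_i\right),$$
which is exactly the reduced inequality, and this closes the proof. The only subtle point is the monotonicity of $(\tilde{p}_i)$; the algebraic reduction and the invocation of Chebyshev are routine. If a reader prefers, the Chebyshev step can also be phrased as a covariance-positivity argument for the two monotone functions $i\mapsto p_i$ and $i\mapsto\tilde{p}_i$ under the uniform counting measure on $\{1,\dots,m\}$.
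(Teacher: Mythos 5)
Your proof is correct and follows essentially the same route as the paper: both arguments come down to Chebyshev's sum inequality (Lemma \ref{lem:Chebyshev_ineq}) together with the fact that $(\tilde{p}_i)_{i=1}^m$ is ordered like $(p_i)_{i=1}^m$, the only difference being that you rearrange algebraically so as to apply Chebyshev to the two increasing sequences $(p_i)$ and $(\tilde{p}_i)$, whereas the paper applies it to $(\tilde{p}_i)$ and the decreasing, zero-sum sequence $\left(\frac{1}{m-1}\left(1-\frac{p_i}{P}\right)-\frac{1}{m}\right)$. A minor plus of your write-up is that you actually verify the monotonicity of $(\tilde{p}_i)$, which the paper only asserts.
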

	\begin{proof} 
		For simplicity, we set  $P=\sum_{i=1}^m p_i$. We get then
		\begin{align*}
			\mathbb{E}_{PM}-\mathbb{E}_{CM}
			&=\frac{n}{\sum_{i=1}^m \tilde{p}_i} \sum_{i=1}^m\tilde{p}_i\left[\frac{1}{m-1}\left(1-\frac{p_i}{P}\right)-\frac{1}{m}\right].
		\end{align*}
		Note that the sequence $(\tilde{p}_i)_{i=1}^m$ is increasing and  	
		$\left(\frac{1}{m-1}\left(1-\frac{p_i}{P}\right)-\frac{1}{m}\right)_{i=1}^m$ is decreasing, and then by Lemma \ref{lem:Chebyshev_ineq}   we get 
		$$\sum_{i=1}^m\tilde{p}_i\left[\frac{1}{m-1}\left(1-\frac{p_i}{P}\right)-\frac{1}{m}\right]\leq \frac{1}{m}\left(\sum_{i=1}^m\tilde{p}_i\right)\left(\sum_{i=1}^m\left(\frac{1}{m-1}\left(1-\frac{p_i}{P}\right)-\frac{1}{m}\right)\right)=0,$$ and this proves our claim.
	\end{proof}
	
	Now we establish a comparison between $\mathbb{E}_{PM}$ and $\mathbb{E}_{RPM}$.
	\begin{prop}\label{pro:comp_RPM_PM}
		Keeping our notations as above, we have  
		$$\mathbb{E}_{PM}-\mathbb{E}_{RPM}\geq \frac{1}{\sum_{i=1}^m\tilde{p}_i}\frac{i(p)(m-n)(m-n-1)}{n(m-i(p))(m-i(p)-1)}\left(\sum_{i=i(p)+1}^m\tilde{p}_i-\frac{\sum_{k=i(p)+1}^m \tilde{p}_k p_k}{\sum_{k=i(p)+1}^m p_k }\right),$$ where $i(p)$ is the index of the distribution $p$.
	\end{prop}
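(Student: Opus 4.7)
The plan is to compute $S(\mathbb{E}_{PM}-\mathbb{E}_{RPM})$, with $S:=\sum_{i=1}^m\tilde{p}_i$, by inserting the explicit norm formulas for both models (using Theorem \ref{thm:One_earsure_optimums} for (RPM)) and then to apply Chebyshev's inequality (Lemma \ref{lem:Chebyshev_ineq}) after a careful splitting along the partition determined by $d:=i(p)$. Set $T:=\{d+1,\dots,m\}$, $T^c:=\{1,\dots,d\}$, $P:=\sum_{i=1}^m p_i$, $P_T:=\sum_{i\in T}p_i$, $\tilde{P}_T:=\sum_{i\in T}\tilde{p}_i$ (and analogously for $T^c$). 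From \eqref{eq:PM_expect} and the analogue of \eqref{eq:RPM_expect} coming from Theorem \ref{thm:One_earsure_optimums}, one writes
$$S\mathbb{E}_{PM}=\frac{n}{m-1}\Big(S-\frac{1}{P}\sum_{i=1}^m\tilde{p}_ip_i\Big),\qquad S\mathbb{E}_{RPM}=\tilde{P}_{T^c}+\frac{(m-d)(n-d)}{\sum_{k\in T}1/\tilde{p}_k}.$$

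The algebraic crux is to surface the ``covariance'' quantity $\tilde{P}_T-\sum_{k\in T}\tilde{p}_kp_k/P_T$ hidden inside $S\mathbb{E}_{PM}$. Using $1-p_i/P=(P_{T^c}+P_T-p_i)/P$ and summing first over $i\in T$ (and symmetrically over $T^c$) yields
$$\sum_{i\in T}\tilde{p}_i\Big(1-\frac{p_i}{P}\Big)=\frac{P_{T^c}\tilde{P}_T}{P}+\frac{P_T}{P}\Big(\tilde{P}_T-\frac{\sum_{k\in T}\tilde{p}_kp_k}{P_T}\Big).$$
Chebyshev's inequality (Lemma \ref{lem:Chebyshev_ineq}) applied to the non-decreasing sequences $(\tilde{p}_i)_{i\in T^c}$ and $(p_i)_{i\in T^c}$ (the monotonicity of $\tilde{p}_i$ follows from $\tilde{p}_{i+1}/\tilde{p}_i=p_{i+1}(1-p_i)/(p_i(1-p_{i+1}))\geq 1$) then gives $\sum_{T^c}\tilde{p}_ip_i\geq\tilde{P}_{T^c}P_{T^c}/d$ when $d\geq 1$, which annihilates the analogous ``covariance'' contribution on $T^c$ and isolates the $T$-covariance. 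Combining this with the AM--HM bound $\sum_{k\in T}1/\tilde{p}_k\geq(m-d)^2/\tilde{P}_T$ provides the main ingredients needed to lower bound $S(\mathbb{E}_{PM}-\mathbb{E}_{RPM})$.

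The main obstacle is matching the precise combinatorial coefficient $d(m-n)(m-n-1)/(n(m-d)(m-d-1))$. The explicit appearance of $m-n$, combined with the factor $d$ in the numerator, strongly suggests that the generic estimates above must be refined by the structural bounds coming from Lemma \ref{lem:d_existence}, namely $d\leq n-1$ and $\tilde{p}_d\sum_{k\in T}1/\tilde{p}_k<n-d\leq\tilde{p}_{d+1}\sum_{k\in T}1/\tilde{p}_k$. Exploiting these should allow one to telescope the residual factors into $(m-n)(m-n-1)/((m-d)(m-d-1))$. As a sanity check, when $d=0$ the right-hand side vanishes and the claim reduces to $\mathbb{E}_{PM}\geq\mathbb{E}_{RPM}$, which can be established by a single global application of Chebyshev to the full index set, exactly parallel to the proof of Proposition \ref{pro:comp_PM_CM}.
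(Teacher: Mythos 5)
Your setup is sound and you have correctly identified the two formulas to subtract, but the proof is not complete: the entire difficulty of this proposition lies in producing the exact coefficient $\frac{d(m-n)(m-n-1)}{n(m-d)(m-d-1)}$ in front of the covariance term, and at precisely that point you write that the structural bounds from Lemma \ref{lem:d_existence} ``should allow one to telescope the residual factors.'' That is the step that needs to be carried out, and it is not a routine telescoping. The paper gets there through three specific estimates, none of which appears in your outline: (i) summing the left inequality of \eqref{eq:d_def} over $i\le d$ to obtain $\sum_{i=1}^d\tilde{p}_i\le \frac{d(n-d)}{\sum_{k\in T}1/\tilde{p}_k}$, which converts the $\tilde{P}_{T^c}$ term of $S\,\mathbb{E}_{RPM}$ into the same ``$1/\sum_{k\in T}1/\tilde p_k$'' scale as the second term; (ii) a Chebyshev bound on $T$ (not on $T^c$) pairing the \emph{increasing} sequence $\bigl(\tilde{p}_i(1-p_i/P_T)\bigr)_{i\in T}$ against the decreasing sequence $(1/\tilde{p}_i)_{i\in T}$, which yields
$\sum_{i\in T}\tilde{p}_i\bigl(1-\tfrac{p_i}{P_T}\bigr)\ge \frac{(m-d)(m-d-1)}{\sum_{i\in T}1/\tilde{p}_i}$ and is exactly where the factor $(m-d)(m-d-1)$ enters; and (iii) the cross comparison $\tilde{p}_ip_i\le\tilde{p}_jp_j$ for $i\le d<j$ (again from \eqref{eq:d_def}), which gives $\frac{1}{P}\sum_{i=1}^m\tilde{p}_ip_i\le\frac{1}{P_T}\sum_{j\in T}\tilde{p}_jp_j$ and lets one replace the global covariance in $\mathbb{E}_{PM}$ by the covariance restricted to $T$. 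Your Chebyshev application on $T^c$ and the AM--HM bound $\sum_{k\in T}1/\tilde{p}_k\ge(m-d)^2/\tilde{P}_T$ are not equivalent substitutes: the latter loses the $(m-d-1)$ versus $(m-d)$ distinction that the stated constant requires, and nothing in your outline produces the factors $(m-n)$ and $(m-n-1)$, which arise only after the algebraic recombination $d(m-n-1)+(m-1)(m-d)$ in the paper's chain of inequalities.

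Your identity for $\sum_{i\in T}\tilde{p}_i(1-p_i/P)$ is correct and could serve as an alternative starting decomposition, and your sanity check at $d=0$ is right (the bound degenerates to $\mathbb{E}_{PM}\ge\mathbb{E}_{RPM}$, though even that case needs inequality (ii) above rather than a verbatim repeat of Proposition \ref{pro:comp_PM_CM}). But as written the proposal is a plan with the decisive estimates missing, so it does not yet constitute a proof.
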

	\begin{proof}
		For simplicity we set $i(p)=d$,  $P:=\sum_{i=1}^m p_i$ and $$A:=\left(\sum_{i=1}^m\tilde{p}_i\right)(\mathbb{E}_{RPM}-\mathbb{E}_{PM}).$$
		We have  \begin{align*}
			&A=\sum_{i=1}^d \tilde{p}_i +\frac{(m-d)(n-d)}{\sum_{k=d+1}^m\frac{1}
				{\tilde{p}_k }}-\frac{n}{m-1}\sum_{i=1}^m\tilde{p}_i \left(1-\frac{p_i}{P}\right)\\
			&= \left(1-\frac{n}{m-1}\right)\sum_{k=1}^d\tilde{p}_k  +\frac{(m-d)(n-d)}{\sum_{k=d+1}^m\frac{1}	{\tilde{p}_k }}
			-\frac{n}{m-1}\left(\sum_{i=d+1}^m \tilde{p}_i
			- \frac{\sum_{k=1}^m\tilde{p}_k p_k}{P}\right)
		\end{align*}
		
		Since $(p_i)_{i=1}^m$ is increasing, then from \eqref{eq:d_def} we infer that 
		\begin{equation}\label{eq:pty_d}
			\sum_{i=1}^d \tilde{p}_i\leq \frac{d(n-d)}{\sum_{i=d+1}^m \frac{1}{\tilde{p}_i}}
		\end{equation}
		which leads to
		\begin{align*}
			&A\leq 
			(n-d)\frac{d(m-n-1)+(m-1)(m-d)}{(m-1)\sum_{i=d+1}^m\frac{1}{\tilde{p}_i}}
			-\frac{n}{m-1}\left(\sum_{i=d+1}^m\tilde{p}_i- \frac{\sum_{k=1}^m \tilde{p}_k p_k}{P}\right)
		\end{align*}
		Alternatively, by setting $\pi_p:=\prod_{i=1}^{m}p_i$  and $P_{d,m}:=\sum_{k=d+1}^m p_k$ and $$g(x):= \frac{x}{1-x}
		\left(1-\frac{x}{P_{d,m}}\right),$$ we see that
		\begin{align*}
			&\tilde{p}_i
			\left(1-\frac{p_i}{\sum_{k=d+1}^m p_k}\right)=\pi_p \frac{p_i}{1-p_i}
			\left(1-\frac{p_i}{P_{d,m}}\right)=\pi_p g(p_i)
		\end{align*}
		and straightforward calculations show that  
		$$g(x)-g(y)= \frac{(x-y)(P_{d,m}-x-y+xy)}{(1-x)(1-y)P_{d,m}}.$$ This shows in particular that the sequence $\left(\tilde{p}_i
		\left(1-\frac{p_i}{\sum_{k=d+1}^m p_k}\right)\right)_{i=d+1}^m$ is  increasing, and appealing to Lemma \ref{lem:Chebyshev_ineq} we get 
		\begin{equation}\label{eq:main_ineq}
			\sum_{i=d+1}^m\tilde{p}_i
			\left(1-\frac{p_i}{\sum_{k=d+1}^m p_k}\right)\geq \frac{(m-d)(m-d-1)}{\sum_{i=d+1}^m\frac{1}{\tilde{p}_i}}.
		\end{equation}
		By consequence, we get
		\begin{align*}
			&\frac{(m-1)}{n}A\leq 
			(n-d)\frac{d(m-n-1)+(m-1)(m-d)}{n(m-d)(m-d-1)}	
			\left(\sum_{i=d+1}^m\tilde{p}_i-\frac{\sum_{i=d+1}^m \tilde{p}_i p_i}{\sum_{i=d+1}^m p_i }\right)\\
			&\qquad 
			-\left(\sum_{i=d+1}^m\tilde{p}_i-\frac{\sum_{i=1}^m \tilde{p}_i p_i}{P}\right)\\
			&\leq \frac{\sum_{i=1}^m \tilde{p}_ip_i}{P}-\frac{\sum_{i=d+1}^m \tilde{p}_ip_i}{\sum_{i=d+1}^m p_i }-\frac{d(m-n)(m-n-1)}{n(m-d)(m-d-1)}\left(\sum_{i=d+1}^m\tilde{p}_i-\frac{\sum_{i=d+1}^m \tilde{p}_i p_i}{\sum_{i=d+1}^m p_i }\right)
		\end{align*}
		Now, from \eqref{eq:d_def} we see that  for all $i=1,\dots,d$ and $j=d+1,\dots,m$ we have  
		$$\tilde{p}_ip_i\leq \frac{n-d}{\sum_{k=d+1}^m\frac{1}{\tilde{p}_k }} p_i\leq \frac{n-d}{\sum_{k=d+1}^m\frac{1}{\tilde{p}_k }} p_j\leq\tilde{p}_j p_j,$$ and then summing over $i$, and $j$ yields 
		$$\frac{1}{\sum_{i=1}^dp_i}\sum_{i=1}^d
		\tilde{p}_ip_i\leq \frac{1}{\sum_{j=d+1}^m p_j}\sum_{j=d+1}^m\tilde{p}_jp_j,$$ from which we deduce that
		$$\frac{1}{P}\sum_{i=1}^m
		\tilde{p}_ip_i\leq \frac{1}{\sum_{j=d+1}^m p_j}\sum_{j=d+1}^m\tilde{p}_jp_j,$$ and this ends the proof.
	\end{proof}
	
	As a direct consequence of Proposition \ref{pro:comp_RPM_PM} we get
	\begin{cor}\label{cor:comp_RPM_PM}
		Keeping the same notations, we have $$\mathbb{E}_{PM}-\mathbb{E}_{RPM}\geq 
		\frac{i(p)(m-n)(m-n-1)}{n(m-i(p))(m-i(p)-1)}
		\frac{\sum_{i=i(p)+1}^m\tilde{p}_i\left(1-\frac{1}{m-i+1}\right)}{\sum_{i=1}^m\tilde{p}_i}.$$
	\end{cor}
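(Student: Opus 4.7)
The plan is to start from the bound proved in Proposition \ref{pro:comp_RPM_PM}. Writing $d:=i(p)$ and $C:=\frac{d(m-n)(m-n-1)}{n(m-d)(m-d-1)}$, that proposition asserts
\begin{equation*}
\mathbb{E}_{PM}-\mathbb{E}_{RPM}\geq \frac{C}{\sum_{i=1}^m\tilde{p}_i}\left(\sum_{i=d+1}^m\tilde{p}_i-\frac{\sum_{k=d+1}^m \tilde{p}_k p_k}{\sum_{k=d+1}^m p_k }\right).
\end{equation*}
Matching this with the claim of the corollary reduces the whole problem to verifying the single inequality
\begin{equation*}
\frac{\sum_{k=d+1}^m \tilde{p}_k p_k}{\sum_{k=d+1}^m p_k}\;\leq\; \sum_{k=d+1}^m\frac{\tilde{p}_k}{m-k+1},
\end{equation*}
because subtracting this from $\sum_{i=d+1}^m \tilde{p}_i$ produces precisely the weighted sum $\sum_{i=d+1}^m \tilde{p}_i\bigl(1-\tfrac{1}{m-i+1}\bigr)$ appearing in the corollary.

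For the key inequality I would exploit the standing assumption $p_1\leq p_2\leq\dots\leq p_m$ from Remark \ref{rem:assumption_on_p}. The idea is that, for each index $k\in\{d+1,\dots,m\}$, the chain
\begin{equation*}
\sum_{j=d+1}^m p_j \;\geq\; \sum_{j=k}^m p_j \;\geq\; (m-k+1)\,p_k
\end{equation*}
holds: the first bound comes from dropping the nonnegative terms with index $j<k$, and the second from replacing each $p_j$, $j\geq k$, by its minimum value $p_k$. Rearranging gives $\frac{p_k}{\sum_{j=d+1}^m p_j}\leq \frac{1}{m-k+1}$, and multiplying by $\tilde{p}_k>0$ and summing over $k=d+1,\dots,m$ produces exactly the upper bound on $\frac{\sum_k \tilde{p}_k p_k}{\sum_k p_k}$ that we need.

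Substituting this back into the bound from Proposition \ref{pro:comp_RPM_PM} then finishes the proof. The main subtlety, which I would flag in the write-up, is that one cannot just reuse Chebyshev's inequality as in the proof of Proposition \ref{pro:comp_RPM_PM}: since $(p_k)$ and $(\tilde{p}_k)$ are both increasing on $\{d+1,\dots,m\}$, Chebyshev only provides the reverse inequality $\frac{\sum_k \tilde{p}_k p_k}{\sum_k p_k}\geq \frac{\sum_k \tilde{p}_k}{m-d}$, which is useless here. What does the work is instead the pointwise estimate above, which exploits the fact that each individual $p_k$ is dominated by the average over the tail $\{k,\dots,m\}$, a consequence of monotonicity rather than a correlation inequality.
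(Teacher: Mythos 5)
Your proposal is correct and follows essentially the same route as the paper: both reduce the corollary to the pointwise bound $\frac{p_k}{\sum_{j=i(p)+1}^{m}p_j}\leq \frac{1}{m-k+1}$ obtained from the monotonicity $p_1\leq\dots\leq p_m$, then multiply by $\tilde{p}_k$ and sum before substituting into Proposition \ref{pro:comp_RPM_PM}. Your added remark that Chebyshev's inequality points the wrong way here is accurate but not needed for the argument.
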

	\begin{proof}
		Note that for all $i\in \set{i(p)+1,\dots,m}$ we have
		$$\sum_{k=i(p)+1}^{m}p_k\geq (m-i+1)p_i,$$ and then 
		$$\frac{p_i}{\sum_{k=i(p)+1}^{m}p_k}\leq \frac{1}{m-i+1},$$ and from Proposition \ref{pro:comp_RPM_PM} we deduce that 
		\begin{align*}
			&\mathbb{E}_{PM}-\mathbb{E}_{RPM}\geq 
			\frac{i(p)(m-n)(m-n-1)}{n(m-i(p))(m-i(p)-1)}
			\frac{\sum_{i=i(p)+1}^m\tilde{p}_i\left(1-\frac{1}{m-i+1}\right)}{\sum_{i=1}^m\tilde{p}_i}
		\end{align*} 
		which is the desired result.	
	\end{proof}

We end this section by giving, based on the results established above, a concrete illustration of the  performance of our model compared to CM and PM models in case of 1-erasure.
\begin{example}\label{example1}
	Let $n=2,\ m=3$ and the probability of erasure for each of the 3 channels is $p=(\frac{1}{20},\ \frac{1}{20},\ \frac{5}{20})$. In this setting $i(p)=0$ and by the expression of $\mathbb{E}_M$ for each of the three models we get
	\begin{equation}\label{ex:expt_cond}
		\mathbb{E}_{RPM}=\frac{342}{1025},\qquad\quad
		\mathbb{E}_{PM}=\frac{74}{175},\qquad\quad
		\mathbb{E}_{CM}=\frac{2}{3}.
	\end{equation}
	From \eqref{ex:expt_cond}, we observe that on average under 1-erasure our model is $21.09\%$ better than PM model, and $49.95\%$ better than CM model.
\end{example}

	\bibliographystyle{abbrv}
	\bibliography{erasures-biblio.bib}
\end{document}